\documentclass[a4paper,11pt]{article}

\newcommand{\N}{\mathbb{N}}
\newcommand{\Z}{\mathbb{Z}}
\newcommand{\R}{\mathbb{R}}

\newcommand{\Q}{\mathbb{Q}}

\newcommand{\M}{\mathbb{M}}

\newcommand\norm[1]{\left\lVert#1\right\rVert}

\newcommand{\information}{{
  \bigskip
  \footnotesize
    
    \textbf{Jamerson Bezerra}:
    \textsc{FCUL-Faculdade de Ci\^encias da Universidade de Lisboa, Campo grande 1749-016, Lisboa} \par\nopagebreak
    \textit{E-mail:} \texttt{jdouglas@impa.br}
  
	\textbf{Carlos G. Moreira}:
    \textsc{IMPA-Instituto de Matem\'atica Pura e Aplicada, 22460-320, Rio de Janeiro } \par\nopagebreak
    \textit{E-mail:} \texttt{gugu@impa.br}
}}




\def\dim{\operatorname{dim}}

\def\diff{\operatorname{Diff}}
\def\per{\operatorname{Per}}
\def\diag{\operatorname{diag}}

\def\max{\operatorname{max}}

\def\per{\operatorname{Per}}

\def\interior{\operatorname{Int}}









\def\quand{\quad\text{and}\quad}

\def\GL{GL}

\usepackage[margin=1.4in]{geometry}

\usepackage{amsmath,amsthm,amssymb,amsfonts}
\usepackage{blindtext}
\usepackage{epsfig}
\usepackage{multicol}
\usepackage{xcolor}
\usepackage[hidelinks]{hyperref}
\usepackage{graphicx}
\usepackage{color}
\usepackage{psfrag}
\usepackage{multirow}
\usepackage{enumitem}
\usepackage{mathtools}
\usepackage{mathptmx}
\usepackage{newtxtext,newtxmath}
\usepackage{dirtytalk}
\usepackage[all]{xy}
\usepackage{xfrac}

\usepackage{caption}
\usepackage{subcaption}

\newtheorem{theorem}{Theorem}
\newtheorem{corollary}[theorem]{Corollary}
\newtheorem{proposition}[theorem]{Proposition}
\newtheorem{example}[theorem]{Example}

\newtheorem{lemma}[theorem]{Lemma}

\newtheorem{remark}[]{Remark}

\begin{document}

\title{Existence of periodic points with real and simple spectrum for diffeomorphisms in any dimension}
\author{Jamerson Bezerra and Carlos Gustavo Moreira}

\maketitle

\begin{abstract}
    We prove that for any $C^r$ diffeomorphism, $f$, of a compact manifold of dimension $d>2$, $1\leq r\leq \infty$, admitting a transverse homoclinic intersection, we can find a $C^1$-open neighborhood of $f$ containing a $C^1$-open and $C^r$-dense set of $C^r$ diffeomorphisms which have a periodic point with real and simple spectrum. We use this result to prove that $C^r$-generically among $C^r$ diffeomorphisms with horseshoes, we have density of periodic points with real and simple spectrum inside the horseshoe. As a corollary, we obtain that generically in the $C^1$-topology the unique obstruction to the existence of periodic points with real and simple spectrum are the Morse-Smale diffeomorphisms with all the periodic points admitting non-real eigenvalues.
\end{abstract}

\section{Introduction}

The periodic points are the ground basis to the study of general dynamical systems. The existence and classification of these special orbits led to many of the results in the field and are the basic structure used to have a global view of general dynamics.

For generic diffeomorphisms in compact manifolds the knowledge of the behaviour of orbits nearby periodic points conducts to comprehensive discoveries of robust phenomena which are exhaustively studied since the 80s. To mention some of these phenomena: robust homoclinic tangencies, existence of infinitely many sinks, blenders and others (see \cite{Ne1970}, \cite{PaVi1994}, \cite{BoDi1996}).

In \cite{PaVi1994}, Palis and Viana proved that for $C^2$-generic families of diffeomorphisms with a tangency associated with a sectionally dissipative hyperbolic periodic point we have periodic points with unique weakest expanding and contracting eigenvalues associated with a tangency. This is used to prove that $C^2$-near any diffeomorphism exhibiting a homoclinic tangency there exists a residual subset of an open set consisting of diffeomorphisms displaying infinitely many sinks. Romero, in \cite{Ro1995}, using the same structure of hyperbolic periodic orbits, extends the results in \cite{PaVi1994} to a context where only a weak version of sectional dissipativity is required.

In \cite{BoVi2004}, Bonatti and Viana proved a criterion for simplicity of Lyapunov spectrum for (real and complex) linear cocycles over hyperbolic shifts assuming a domination condition in the cocycles (fiber bunch condition) which implies the existence of linear holonomies. Using the holonomies they guarantee, after slight perturbation, the existence of periodic points such that the cocycles in these periodic points have real and simple spectrum. They claim that it is possible to use the same arguments for diffeomorphisms over hyperbolic basic sets in any regularity $0\leq r\leq \infty$. However, a careful inspection in their prove shows that it is necessary a good regularity of the holonomies which can be attained assuming domination conditions in the cocycles.

For locally constant cocycles taking values in the group of real symplectic matrices or taking values in the group of unitary matrices (real or complex), Matheus and Cambrainha in \cite{CaMa2016}, using algebraic methods, prove the existence of periodic points with real and simple spectrum for a full measure set of locally constant cocycles,.

Density of periodic points with real and simple spectra in non-trivial basic pieces for a $C^r$-residual set of diffeomorphisms in $3$-manifolds is obtained in \cite{BeRoVa2018}, by Bessa, Rocha and Varandas, using the ideas introduced in \cite{BoVi2004}. They apply this result to prove that in the $C^1$-topology either all periodic points of hyperbolic basic pieces for a diffeomorphism $f$ has simple Lyapunov spectrum $C^1$-robustly (in this case $f$ has a finest dominated splitting) or it can be $C^1$-approximated by an equidimensional cycle associated with periodic points with robust different signatures. 

In the $C^1$ topology, however, many perturbation techniques have been developed and a good description of the generic behaviour of the dynamics of diffeomorphisms has been attained in several works. One of these descriptions is given by Crovisier, in \cite{Cr2010}, where it is stated that any diffeomorphism in a compact manifold can be $C^1$-approximated by a Morse-Smale (finitely many periodic orbits with transverse intersection of their stable and unstable manifolds and zero topological entropy) or by one admitting a transverse homoclinic intersection (infinitely many periodic orbits and positive topological entropy)

The purpose of the present work is to analyse how this latter mechanism, namely, the existence of transverse homoclinic intersection, can led, for a $C^1$-open and $C^r$ dense set of diffeomorphisms, to plenty of periodic orbits with real and simple spectra in manifolds with any given dimension hence guaranteeing that $C^r$-generically among the diffeomorphisms with hyperbolic basic pieces we have density of the periodic points with real and simple spectra.

As a consequence, using the results described in \cite{Cr2010}, we prove that generically in the $C^1$-topology the unique obstruction to the existence of periodic orbits with real and simple spectra are the Morse-Smale diffeomorphisms with all the periodic points admitting non-real eigenvalues.

The work is divided as follows: in Section \ref{17621.1} we give the general definitions and precise statements of our results. In Section \ref{10621.1}, we introduce some notations of the linear algebra used in the upcoming sections. In Section \ref{28621.1}, we reduce the statement of the Theorem \ref{7621.1} to a linear algebra problem which is the most technical part of the work (see Proposition \ref{10621.2}). In Section \ref{28621.2}, we prove the main results using Proposition \ref{10621.2}. In the last section, \ref{17621.2}, we give the proof of Proposition \ref{10621.2}.

\section{Preliminary definitions and results}\label{17621.1}

Let $M$ be a smooth compact manifold of dimension $d > 2$. For any $r \in [1,\infty]$, we denote by $\diff^r(M)$ the set of $C^r$ diffeomorphisms of $M$ endowed with the natural $C^r$ topology.

Let $f:M\rightarrow M$ be a $C^r$ diffeomorphism of $M$. The \emph{spectrum} of a periodic point $p = f^l(p)$, denoted by $\sigma(df^l(p))$, is the set of eigenvalues of $df^l(p)$. We say that $p$ has \emph{real and simple spectrum} if all the eigenvalues of $df^l(p)$ are real and have different absolute values.

The diffeomorphism $f$ admits a \emph{transversal homoclinic intersection} if there exists a hyperbolic periodic point $p$ such that the stable manifold of $p$ and the unstable manifold of $p$ intersects each other transversely at some point $q\in M$.

\begin{theorem}\label{7621.1}
Fix $r\in [1,\infty]$. Let $f:M\rightarrow M$ be a $C^r$ diffeomorphism admitting a transversal homoclinic intersection associated with a hyperbolic periodic point $p\in M$. Giving an open neighborhood $U_p\subset M$ of $p$, there exists a $C^1$-open neighborhood of $f$, $\mathcal{U}(f)$, in $\diff^r(M)$ and a $C^1$-open and $C^r$-dense subset of $\mathcal{U}(f)$, $\mathcal{R}$, such that every $g\in \mathcal{R}$ admits a periodic point $p_g\in U_p$ with real and simple spectrum.
\end{theorem}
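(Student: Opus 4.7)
The plan is to reduce Theorem \ref{7621.1} to a linear-algebra problem and then invoke Proposition \ref{10621.2}. First I would use the classical Birkhoff--Smale theorem: the transverse homoclinic intersection at $q \in W^s(p) \cap W^u(p)$ produces a horseshoe $\Lambda$ contained in an arbitrarily small neighborhood of $\bigcup_{n\in\Z}\{f^n(p), f^n(q)\}$, with periodic points $p_n$ of arbitrarily large period whose orbits shadow the orbit of $p$ for long stretches and then take a single homoclinic excursion along the orbit of $q$. In particular, one can arrange $p_n \in U_p$.

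Next I would analyze $df^{L_n}(p_n)$, where $L_n$ is the period of $p_n$. Using coordinates adapted to the hyperbolic splitting in a neighborhood of $p$ (for instance a $C^1$-linearization, or a more careful normal form when the regularity of linearization is insufficient), the derivative decomposes, up to conjugation and controlled error, as
\[
df^{L_n}(p_n) \approx A^{n_1} \cdot B \cdot A^{n_2},
\]
where $A = df^l(p)$ and $B$ is a \emph{transition matrix} encoded by the differentials of $f$ along the homoclinic excursion. The perturbative degree of freedom I would exploit is $B$: a $C^r$-small bump perturbation of $f$ supported in a tiny neighborhood of a point $q' = f^k(q)$ on the homoclinic orbit, disjoint from the orbit of $p$ and from every other relevant iterate of $q$, modifies $B$ into any prescribed nearby matrix, while leaving $A$, the orbit of $p$, and (by transversality) the homoclinic intersection essentially unchanged.

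At this point Proposition \ref{10621.2} does the heavy lifting: given any hyperbolic $A \in \GL(d,\R)$, the proposition provides an open and dense set of admissible transition matrices $B$ such that $A^{n_1} B A^{n_2}$ has real and simple spectrum for all sufficiently large $n_1, n_2$. Combining this with the perturbation scheme above yields periodic points $p_g\in U_p$ with real and simple spectrum. Openness of $\mathcal{R}$ in the $C^1$-topology follows from the hyperbolic persistence of $p_g$ together with the fact that \emph{``all eigenvalues real and of pairwise distinct absolute value''} is an open condition in $\GL(d,\R)$; and $C^r$-density inside $\mathcal{U}(f)$ follows by applying the same perturbative scheme starting from an arbitrary $g \in \mathcal{U}(f)$.

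The main obstacle is Proposition \ref{10621.2} itself. When $A$ has complex or non-simple eigenvalues, no iterate $A^n$ alone has real and simple spectrum, so $B$ must be used to split complex Jordan blocks and to break coincidences of eigenvalue moduli. The subtle point is that as $n\to\infty$ the matrix $A^n$ degenerates into a strongly dominated splitting between strong stable and strong unstable subspaces, making the relevant piece of $A^{n_1} B A^{n_2}$ behave like a rank-one contribution in each hyperbolic block; controlling this \emph{uniformly} in $n$ (rather than tuning a single carefully chosen value of $n$) is what I expect to be the technical heart of the argument.
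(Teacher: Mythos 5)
Your overall strategy is the one the paper follows: Birkhoff--Smale periodic points $p_n$ shadowing the orbit of $p$ with a single homoclinic excursion, a decomposition of $df^{\per}(p_n)$ into a long power of $df^{\ell_0}(p)$ times a transition matrix converging to the derivative along the excursion, a localized bump perturbation to put the transition matrix in general position, and then Proposition \ref{10621.2}. However, there are two concrete gaps. First, you misstate what the proposition delivers and what it requires. It does \emph{not} give real and simple spectrum of $L_nT^n$ for all large $n$; it gives it only along a \emph{subsequence}, and this is unavoidable: if $df^{\ell_0}(p)$ has a complex conjugate pair, the corresponding $2\times 2$ block of $L_nT^n$ behaves like $P_n R_{\alpha_n+n\theta}$, which has real eigenvalues only when $n\theta$ falls in a small window mod $1$ --- the paper extracts the subsequence via rational independence of the rotation angles. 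Moreover the hypotheses of the proposition constrain $T=df^{\ell_0}(p)$ itself (non-resonance, block-diagonal form with irrational, rationally independent rotation angles, and pairwise distinct moduli between blocks), so you must also perturb the derivative \emph{at the periodic point $p$}, not only the transition matrix $B$; your phrasing ``given any hyperbolic $A$'' skips this. For the theorem these corrections are not fatal (density only needs one good $n$), but as written your appeal to the proposition is to a false statement.

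Second, your ``$df^{L_n}(p_n)\approx A^{n_1}BA^{n_2}$ up to controlled error'' conceals a genuine difficulty: $df^{\ell_0 n}(p_n)$ is a product of $n$ matrices each merely \emph{close} to $A$, and the multiplicative error over $n$ factors need not stay controlled as $n\to\infty$ relative to the exponentially degenerating $A^n$. The paper avoids this entirely by first perturbing $p$ to be non-resonant and invoking Sternberg's linearization theorem, so that in suitable $C^r$ coordinates one has the \emph{exact} identity $df^{\ell_0(n+N_0)}(p_n)=L_nT^n$ with $L_n\to L$; all the asymptotic analysis is then done on this exact product. You gesture at ``a more careful normal form when the regularity of linearization is insufficient,'' but without the non-resonance/linearization step (or an explicit error estimate uniform in $n$) this part of your argument does not close.
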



Let $f:M\rightarrow M$ be a $C^r$ diffeomorphism of $M$. A compact, $f$-invariant set $\Lambda\subset M$ is said to be \emph{hyperbolic} if there exist $df$-invariant continuous subbundles of $TM$, $E^s$ and $E^u$, and there exist constants, $C>0$ and $\lambda\in (0,1)$, such that $TM = E^s\oplus E^u$ and for every $x\in M$
\[
\norm{df^n(x)\cdot v}\leq C\lambda^n\norm{v}\quand \norm{df^{-n}\cdot u}\leq C\lambda^n\norm{u},
\]
for every $n\geq 0$, $v\in E^s(x)$ and $u\in E^u(x)$. The hyperbolic set $\Lambda$ is said to be a \emph{basic piece} if $f|_{\Lambda}$ is transitive and
\[
\overline{\per(f|_{\Lambda})} = \Lambda,
\]
where $\per(f|_{\Lambda})$ denote the set of periodic orbits of the diffeomorphism $f|_{\Lambda}$. Let $\Lambda$ be a basic piece for $f$ and $g:M\rightarrow M$ a diffeomorphism $C^1$ close to $f$, then there exists a basic piece for $g$, $\Lambda_g$, such that $f|_{\Lambda}$ is H\"older conjugated to $g|_{\Lambda_g}$. We call $\Lambda_g$ the \emph{hyperbolic continuation} of $\Lambda$ for $g$ (for more details see \cite{HaKa2002}).

\begin{corollary}\label{11621.2}
Fix $r\in [1,\infty]$. Let $\Lambda$ be a basic piece for $f\in \diff^r(M)$. Then, for every open set $U\subset M$, $U\cap \Lambda \neq \varnothing$, there exists a $C^1$-open neighborhood of $f$, $\mathcal{U}(f)$, in $\diff^r(M)$ and an $C^1$-open and $C^r$-dense subset of $\mathcal{U}(f)$, $\mathcal{R}$, such that every $g\in \mathcal{R}$, admits a periodic point $p_g\in \Lambda_g\cap U$ with real and simple spectrum. In particular, there exists a residual subset $\mathcal{G}$ of a $C^1$-open neighborhood $\mathcal{G}(f)\subset\diff^r(M)$ of $f$ such that if $g\in \mathcal{G}$ then the set of $g$-periodic points with real and simple spectrum is dense in $\Lambda_g$.
\end{corollary}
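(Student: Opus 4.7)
The plan is to deduce the corollary from Theorem \ref{7621.1} via a Baire-category argument. Two standard facts about a basic piece $\Lambda$ are crucial: periodic points of $f|_\Lambda$ are dense in $\Lambda$, and every hyperbolic periodic point $p\in\Lambda$ admits a transverse homoclinic intersection (by transitivity of $f|_\Lambda$ together with the inclination lemma, since the stable and unstable manifolds of $p$ accumulate on $\Lambda$ and meet transversely there). Hyperbolic continuation gives continuity $g\mapsto\Lambda_g$ in Hausdorff distance on a $C^1$-neighborhood of $f$. I would fix such a $C^1$-open neighborhood $\mathcal{V}(f)$ small enough that $\Lambda_g$ is defined for every $g\in\mathcal{V}(f)$ and that, for the given open set $U$ with $U\cap\Lambda\neq\varnothing$, one still has $U\cap\Lambda_g\neq\varnothing$ for every $g\in\mathcal{V}(f)$.

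Let $\mathcal{O}(U)\subset\mathcal{V}(f)$ consist of those $g$ possessing a hyperbolic periodic point in $U\cap\Lambda_g$ with real and simple spectrum. The set $\mathcal{O}(U)$ is $C^1$-open by hyperbolic continuation of periodic points and continuity of eigenvalues, since ``real spectrum'' and ``pairwise distinct moduli'' are open conditions. For $C^r$-density, fix $g\in\mathcal{V}(f)$: density of periodic points in $\Lambda_g$ provides a hyperbolic periodic $p_g\in\Lambda_g\cap U$, which admits a transverse homoclinic intersection as above. Applying Theorem \ref{7621.1} to $g$ with a neighborhood $U_{p_g}\subset U$ of $p_g$ yields a $C^1$-open, $C^r$-dense subset $\mathcal{R}_g$ of a $C^1$-neighborhood of $g$ whose elements possess a periodic point in $U_{p_g}$ with real and simple spectrum. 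Inspection of the construction in Theorem \ref{7621.1} shows that this periodic orbit is created inside the horseshoe arising from the transverse homoclinic intersection of (the continuation of) $p_g$, hence belongs to $\Lambda_h$ for the perturbed $h$. Intersecting $\mathcal{R}_g$ with $\mathcal{V}(f)$ gives elements arbitrarily $C^r$-close to $g$ in $\mathcal{O}(U)$, proving $C^r$-density.

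For the ``in particular'' statement I would fix a countable basis $\{U_n\}$ of the topology of $M$, retain those $U_n$ that meet $\Lambda$, and set $\mathcal{G}(f):=\mathcal{V}(f)$ and $\mathcal{G}:=\bigcap_n\mathcal{O}(U_n)$. Since each $\mathcal{O}(U_n)$ is $C^1$-open (hence $C^r$-open) and $C^r$-dense in $\mathcal{V}(f)$, the set $\mathcal{G}$ is a residual subset of $\mathcal{G}(f)$, and for $g\in\mathcal{G}$ the periodic points of $g|_{\Lambda_g}$ with real and simple spectrum meet every $U_n$, hence are dense in $\Lambda_g$. The main obstacle I anticipate is the identification step: arguing rigorously that the periodic orbit furnished by Theorem \ref{7621.1} lies in $\Lambda_g$ and not merely in $U$. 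This should be a consequence of the fact that the orbit is produced from a transverse homoclinic orbit of a point of $\Lambda_g$, so by the Birkhoff--Smale theorem and maximality of the basic piece it sits inside $\Lambda_g$, but the precise verification requires a careful look at the construction performed in the proof of Theorem \ref{7621.1}.
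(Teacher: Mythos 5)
Your argument follows essentially the paper's own route: the first part is obtained exactly as in the paper (density in $\Lambda$ of periodic points admitting transverse homoclinic intersections, then Theorem \ref{7621.1} applied at such a point in $U$), and your Baire argument over a countable basis, intersecting the open dense sets $\mathcal{O}(U_n)$, is a repackaging of the paper's inductive construction with the finite $1/n$-covers $\mathcal{B}_n$ (with the same small boundary-case caveat, ignored by both, when a given basis element fails to meet $\Lambda_g$ for some nearby $g$). The identification issue you flag --- that the periodic orbit produced by Theorem \ref{7621.1} actually lies in $\Lambda_g$ --- is likewise passed over silently in the paper's proof, and your sketch (homoclinic point chosen inside $\Lambda$ via the local product structure, Birkhoff--Smale horseshoe, local maximality of the basic piece) is the standard way to settle it.
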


\begin{remark}
The arguments used in the prove of Theorem \ref{7621.1} and Corollary \ref{11621.2} can be easily adapted to guarantee that for generic H\"older linear cocycle over Markov shift we have a dense set of periodic points such that the cocycle has real and simple spectrum.
\end{remark}

The machinery used to perturb $C^1$ diffeomorphisms is well established in the literature. This allow us to improve the result in the $C^1$ topology to guarantee that the unique generic obstruction to the existence of the periodic points with real and simple spectrum are the Morse-Smale
\footnote{$f$ is \emph{Morse-Smale} if there are only finitely many periodic points all hyperbolic and such that for every pair of periodic points $p,q\in M$, all the intersections between $W^s(p)$ and $W^s(q)$ are transversal.}
diffeomorphisms with all the periodic points admitting a purely complex eigenvalue. 
\begin{corollary}\label{8621.1}
Any $C^1$ diffeomorphism can be $C^1$-approximated by diffeomorphism which is either Morse-Smale or has a transversal homoclinic orbit associated with a periodic point with real and simple spectrum.
\end{corollary}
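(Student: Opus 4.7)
The plan is to combine Crovisier's $C^1$-dichotomy from \cite{Cr2010} with Corollary \ref{11621.2}. First I will invoke Crovisier's theorem, which asserts that every $C^1$ diffeomorphism of $M$ is $C^1$-approximable either by a Morse-Smale diffeomorphism or by one admitting a transverse homoclinic intersection. In the first case there is nothing left to prove, so I may assume that $f$ is $C^1$-approximated by some $g$ exhibiting a transverse homoclinic intersection at a hyperbolic periodic point $p_g$.

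The next step will be to produce a horseshoe inside which the desired periodic point can be located. By the Smale--Birkhoff homoclinic theorem, $g$ admits a non-trivial hyperbolic basic piece $\Lambda_g$ containing the orbit of $p_g$ together with the associated transverse homoclinic point. Choosing any open set $U\subset M$ with $U\cap \Lambda_g \neq \varnothing$ and applying Corollary \ref{11621.2} with $r=1$, I obtain a $C^1$-open and $C^1$-dense subset $\mathcal{R}$ of a $C^1$-neighborhood of $g$ such that every $h\in \mathcal{R}$ admits a periodic point $p_h\in \Lambda_h\cap U$ with real and simple spectrum, where $\Lambda_h$ denotes the hyperbolic continuation of $\Lambda_g$ for $h$.

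To conclude, I need to verify that $p_h$ admits a transverse homoclinic orbit. Since $\Lambda_h$ is a non-trivial transitive hyperbolic basic piece, the stable and unstable manifolds of every periodic point of $h|_{\Lambda_h}$ are dense in $\Lambda_h$; in particular, $W^s(p_h)$ and $W^u(p_h)$ intersect transversely at a homoclinic point of $p_h$. Picking any $h\in \mathcal{R}$ sufficiently $C^1$-close to $g$ (hence to $f$) produces the required approximation. The main obstacle in this plan is not topological but organisational: one has to ensure that the perturbed periodic point with real and simple spectrum does lie inside a non-trivial basic piece, since only then is the existence of a transverse homoclinic orbit automatic. This is precisely why Corollary \ref{11621.2}, rather than Theorem \ref{7621.1} on its own, is the convenient black box to invoke here — it places the new periodic point directly inside the persisting horseshoe $\Lambda_h$, making the transverse homoclinic intersection come for free from the transitivity of $h|_{\Lambda_h}$.
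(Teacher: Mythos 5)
Your proposal is correct, and at the top level it follows the same strategy as the paper: invoke Crovisier's $C^1$-dichotomy and then feed the transverse homoclinic intersection into the main results of the paper. The difference is in which black box you use afterwards. The paper's own proof is a one-liner that applies Theorem \ref{7621.1} with $r=1$ directly to the diffeomorphism $g$ admitting the homoclinic intersection, and it leaves implicit that the periodic point with real and simple spectrum produced there has a transverse homoclinic orbit (this is true, but only because in the construction of Theorem \ref{7621.1} the point $p_{n_k}$ lies inside the horseshoe $\Lambda(n_k)$, a fact not visible from the bare statement of the theorem). You instead pass through the Smale--Birkhoff theorem and Corollary \ref{11621.2}, which places the new periodic point inside the hyperbolic continuation $\Lambda_h$ of a non-trivial basic piece, so the transverse homoclinic orbit is then automatic; this makes your argument a genuinely complete proof of the statement as written, at the modest cost of invoking the corollary (itself a consequence of Theorem \ref{7621.1}) rather than the theorem alone. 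One small point of precision: for a merely transitive basic piece it is the stable and unstable manifolds of the \emph{orbit} of $p_h$ that are dense in $\Lambda_h$; to get a transverse intersection of $W^s(p_h)$ with $W^u(p_h)$ itself one should either note that the continued horseshoe is topologically mixing, or pass from a transverse intersection of the orbit's invariant manifolds to one for the point via the inclination lemma --- a standard step, but worth stating.
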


\section{Notations and Matrix decomposition}\label{10621.1}

For $k,l \in \N$, we denote by $\M_{k\times l}(\R)$ the space of real matrices with $k$ rows, $l$ columns. When $k=l$ we denote $\M_{k\times k}(\R)$ simple by $\M_k(\R)$. $\GL_k(\R)$ stands for the space of invertible matrices in $\M_k(\R)$. We use the notation $A = \diag[A_1:\cdots:A_m]$ to describe a block-diagonal matrix, where $A_i \in \M_{k_i}(\R)$. For $A \in \M_k(\R)$, the set of eigenvalues of $A$, which it is called \emph{Spectrum} of $A$, is denoted by $\sigma(A)$. Define the inverse map $\iota:\GL_k(\R)\rightarrow \GL_k(\R)$ given by
\[
\iota(A) = A^{-1}.
\]

Let $i_1,\dots, i_m $ be positive integers with $i_1+\dots+i_m = d\geq 1$. Set, for each $j=1,\dots, m$, $\kappa_j = i_j+\dots+i_m$. We denote by $A_j: \M_{\kappa_j}(\R)\rightarrow \M_{i_j}(\R)$ and $D_j:\M_{\kappa_j}(\R)\rightarrow \M_{\kappa_{j+1}}(\R)$ the canonical projections associated with the decomposition $\R^d=\R^{i_1}\times\cdots\times\R^{i_m}$. Using this notation any $L\in \M_{\kappa_j}(\R)$ can be written in the block form as
\begin{align*}
    L=\left(
    \begin{array}{cc}
        A_j(L) & * \\
        * & D_j(L)
    \end{array}
    \right),
\end{align*}
for any $j=1,\dots,m$. Associated with the decomposition $\R^d=\R^{i_1}\times\cdots\times\R^{i_m}$ we define recursively the maps $D^{(j)}: \GL_d(\R)\rightarrow \M_{\kappa_{j+1}}(\R)$, $j=1,\dots, m-1$ by
\[
D^{(j)}(L) = D_j\circ D_{j-1}\circ\cdots\circ D_1(L)
\]
Observe that the above defined maps are analytic maps. The definition of such family of matrix valued maps will make sense in the proof of the Theorem \ref{7621.1} where a decomposition of the spectrum of periodic points will be necessary to obtain the desired result.
\begin{example}
As an example consider the three dimensional case with the decomposition $\R^3 = \R\times \R^2$. Then, we have the maps $A_1:\M_3(\R)\rightarrow \R$ and $D^{(1)} = D_1:\M_3(\R)\rightarrow \M_2(\R)$. So, if we take, for example, $L = \diag[\zeta_1:\zeta_2\cdot R_{\theta}]\in \GL_3(\R)$ we have $A_1(L) = \zeta_1$ and $D_1(L) = \zeta_2\cdot R_{\theta}$.
\end{example}
In the above example and throughout of this work we use the notation
\[
R_{\theta} = \left(
\begin{array}{cc}
    \cos2\pi\theta & -\sin2\pi\theta \\
    \sin2\pi\theta & \cos2\pi\theta  
\end{array}
\right)
\]
for the rigid rotation of angle $2\pi\theta$ with $\theta \in [0,1]$.

\section{Reduction by perturbations}\label{28621.1}
 
 In this section we show that by small $C^r$-perturbations we can assume that any diffeomorphism admitting a transversal homoclinic intersection satisfies a list of conditions which will be strongly used in Section \ref{28621.2} to proof Theorem \ref{7621.1}.
 
 Let $f:M\rightarrow M$ be a $C^r$ diffeomorphism of a compact manifold $M$ with $r\in [1,\infty]$ and $\dim(M)\geq 3$. Let $q \in M$ be a transversal homoclinic point associated with a hyperbolic periodic point $p\in M$ for $f$.
 
 We say that a periodic point $p$ of the diffeomorphism $f$, of period $\ell_0$, is \emph{non-resonant} if the eigenvalues of $df^{\ell_0}(p)$ do not satisfies any resonant condition, i.e., writing $\sigma(df^{\ell_0}(p)) = \{\lambda_1,\dots,\lambda_m\}$ we have that
 \begin{align}\label{8621.3}
 \lambda_1^{k_1}\cdots\lambda_m^{k_m} \neq 1,
 \end{align}
 for every $k = (k_1,\dots, k_m)\in \Z^m$.
 
\begin{enumerate}
    \item [1.] There are only countably many algebraic conditions such as those referred in \eqref{8621.3} to guarantee. For that reason we can always find a matrix which is close to $df^{\ell_0}(p)$ with non-resonant spectrum. So, making a $C^r$ perturbation, we assume that the hyperbolic periodic point $p$ of $f$ is non-resonant.

    \item [2.] Since $p$ is non-resonant, Using Sternberg's Linearization theorem (see \cite{St2010}), we can assume, by small perturbation of $f$ in the $C^r$-topology, that there exists linear $C^r$ coordinates in a neighborhood of $p$. In other words, there exists a small open neighborhood $U\subset M$ of $p$ and $C^r$ diffeomorphism $\varphi: U\rightarrow \R^n$ centered in $p$ such that
    \[
    d(\varphi\circ f^{\ell_0}\circ \varphi^{-1})(x) = d(\varphi\circ f^{\ell_0}\circ\varphi^{-1})(0),
    \]
    for every $x\in \varphi(U)$. We denote by $T$ the linear transformation $d(\varphi\circ f^{\ell_0} \circ\varphi^{-1}(0))$. We also abuse of the language and write $T = df^{\ell_0}(p)$.
    
    \item[3.] By small $C^r$-perturbation of $f$ we can assume that the linear transformation $T$ above defined has the following form:
    \begin{align*}
        T = \diag[T_1:\cdots: T_m],
    \end{align*}
    where for each $i \in \{1,\dots, m\}$ we have that
    \begin{itemize}
        \item $T_i \in \GL_1(\R) = \R^*$ or $T_i \in \GL_2(\R)$;
        \item In the case where $T_i\in \GL_2(\R)$ we have that $T_i = |\lambda_i|\cdot R_{\theta_i}$ for $\theta_i\in [0,1]\backslash \Q$ and $\lambda_i\in \sigma(T_i)$;
        \item If $j\in \{1,\dots,m\}$ with $i \geq j$, then for any $\lambda_i \in \sigma(T_i)$ and $\lambda_j\in \sigma(T_j)$ we have $|\lambda_i|>|\lambda_j|$.
    \end{itemize}
    This can be easily attained by slightly perturbation of the Jordan's canonical form of the matrix $T$.
\end{enumerate}

Consider the points $q_1, q_2\in M$ in the orbit of the transverse homoclinic point $q$, $f^{\ell_0 N_0}(q_2) = q_1$, such that $f^{\ell_0 n}(q_1)$ and $f^{-\ell_0 n}(q_2)$ belong to the open neighborhood with linear coordinates $U$, for every non negative integer $n$. In other words, $f^{\ell_0 N_0}(q_2) = q_1$ and
\[
f^{\ell_0 n}(q_1)\in W^s_{loc}(p)\cap U \quand f^{-\ell_0 n}(q_2)\in W^u_{loc}(p)\cap U,
\]
for every $n\geq 0$.

For $n$ large enough we can take a neighborhood $V(n)\subset U$ of $p$ and $q_1$ such that $q_2\in \interior(f^{\ell_0n}(V(n)))$. So, the $f^{\ell_0(n + N_0)}$-invariant set
\[
\Lambda(n) = \bigcap_{k\in \Z}f^{k\ell_0(n + N_0)}(V(n)),
\]
is a basic piece for $f^{\ell_0(n+N_0)}$ such that $f^{\ell_0(n+N_0)}|_{\Lambda(n)}$ is conjugated to a two-sided shift in the space $\{0,1\}^{\Z}$. Set $p_n$ as the unique $f^{\ell_0(n+N_0)}$-fixed point in $\Lambda(n)\backslash\{p\}$. Hence, for every $n$ large enough, we have that
\[
f^{\ell_0 j}(p_n)\in U
\]
for every $j = 0,\dots, n$. Note also that $V(n)$ can be build in a fashion to guarantee that $p_n$ converges to $q_1$ and so $f^{\ell_0 n}(p_n) = f^{-\ell_0N_0}(p_n)$ converges to $f^{-\ell N_0}(q_1) = q_2$.

We fix the decomposition $\R^d = \R^{i_1}\times\cdots\times\R^{i_m}$ where each $i_k$ is chosen such that $T_k \in \GL_{i_k}(\R)$. So, $i_k\in \{1,2\}$ for every $k=1,\dots,m$ and $i_1+\cdots+i_m = d$. We also fix the notation, for each $k = 1,\cdots, m$, $\kappa_k = i_k+\cdots+i_m$.
\begin{enumerate}
    \item[4.] We can perform an arbitrarily small $C^r$ perturbation of $f$, (modifying $f$ only in a neighborhood of $q_2$), such that $L := df^{\ell_0 N_0}(q_2)$ (in coordinates) satisfies, recursively, the following conditions (look at the notations introduced in the Section \ref{10621.1}):
    \begin{itemize}
        \item $L = D^{(0)}(L) \in \GL_d(\R) = \GL_{\kappa_1}(\R)$ and $A_1\circ D^{(0)}(L) = A_1(L) \in \GL_{i_1}(\R)$ has different singular values;
        \item For every $1\leq j \leq m-1$, $A_{j+1}\circ\iota\circ D^{(j)}\circ \iota(L) \in \GL_{i_{j+1}}(\R)$ has different singular values and $D^{(j)}\circ \iota(L) = D_j\circ D_{j-1}\circ\cdots\circ D_1\circ\iota(L) \in \GL_{\kappa_{j+1}}(\R)$.
    \end{itemize}
\end{enumerate}

\begin{remark}
In the case of a matrix in $\GL_1(\R) = \R^*$ does not make sense to ask different singular values. So, it is intrinsically assumed that we are asking different singular values only for those indices $1\leq k\leq m$ such that $i_k = 2$.
\end{remark}

To simplify notation, set $F = f^{\ell_0}$. Using the above constructed local coordinates around $p$ and taking $n$ large enough we have that (remember that $F$-period of $p_n$ is $n + N_0$ and that $dF(0) = T$)
\begin{align*}
    dF^{n + N_0}(p_n) &= dF^{N_0}(F^{n}(p_n))\cdot dF^{n}(p_n)\\
    &= L_n\cdot T^{n},
\end{align*}
 where $L_n = dF^{N_0}(F^{n}(p_n))$ converges to $L = dF^{N_0}(q_2)$, since $F^{n}(p_n) = F^{-N_0}(p_n)$ converges to $F^{-N_0}(q_1) = q_2$.

We summarize the above discussion in the next proposition.
\begin{proposition}\label{10621.2}
 Fix $d\geq 3$. Consider $L\in \GL_d(\R)$ and $T = \diag[T_1:\cdots:T_m]\in \GL_d(\R)$ satisfying the following conditions:
 \subparagraph{Conditions on T:}
 \begin{itemize}
    \item $T_i \in \GL_1(\R) = \R^*$ or $T_i \in \GL_2(\R)$, for every $1\leq i\leq m$;
    \item If $I = \{j\in \{1,\dots,\kappa\};\ i_j = 2\}$, then, for each $i\in I$, $T_i = |\lambda_i|\cdot R_{\theta_i}$ with $\theta_i\in [0,1]$ and $\theta = (\theta_i)_{i\in I}\in \R^{\#I}$ is rationally independent.
    \item If $i,j\in \{1,\dots,m\}$ with $i \geq j$, then for any $\lambda_i \in \sigma(T_i)$ and $\lambda_j\in \sigma(T_j)$ we have $|\lambda_i|>|\lambda_j|$;
    \end{itemize}
    \subparagraph{Conditions on L:}
    \begin{itemize}
    \item $L = D^{(0)}(L) \in \GL_d(\R) = \GL_{\kappa_1}(\R)$ and $A_1\circ D^{(0)}(L) = A_1(L) \in \GL_{i_1}(\R)$ has different singular values;
    \item For every $1\leq j \leq m-1$,$A_{j+1}\circ\iota\circ D^{(j)}\circ \iota(L) \in \GL_{i_{j+1}}(\R)$ has different singular values and $D^{(j)}\circ \iota(L) \in \GL_{\kappa_{j+1}}(\R)$.
 \end{itemize}
 Then, for every sequence $(L_n)_n\subset \GL_d(\R)$ converging to $L$ and for every arithmetic projection $(an+b)_{n\in \N}$ there exists a subsequence $(n_k)_k$ such that $L_{n_k}\cdot T^{an_k+b}$ has real and simple spectrum for every $k\in \N$.
\end{proposition}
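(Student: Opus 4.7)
The plan is to combine Weyl equidistribution along the arithmetic progression $(an+b)$ with an iterated block-triangularization of $LT^{an+b}$ based on the magnitude separation in $\sigma(T)$, reducing the problem to choosing rotation angles so that $m$ effective $i_j\times i_j$ matrices simultaneously have real distinct spectra. Real and simple spectrum is a $C^0$-open property, so establishing this for $LT^{an_k+b}$ with a quantitative discriminant gap automatically transfers to $L_{n_k}T^{an_k+b}$ because $L_n\to L$. The magnitude separation $|\lambda_1|<\cdots<|\lambda_m|$ moreover splits $\sigma(LT^n)$ into $m$ groups at distinct exponential scales, so cross-group distinctness of absolute values is automatic for large $n$; only intra-group realness and distinctness within the two-dimensional groups need be secured.

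To produce the effective matrices I would iteratively solve Sylvester equations to conjugate $LT^n$ into approximate block-diagonal form: at each of the $m-1$ triangularization passes the relevant Sylvester equation has a solution of norm $O\bigl((|\lambda_j|/|\lambda_{j+1}|)^n\bigr)\to 0$, giving a conjugating matrix close to identity and a block-triangular remainder with exponentially small off-diagonal corrections. The $j$-th diagonal block of the resulting form takes the shape $G_j\cdot T_j^n$, where the $i_j\times i_j$ matrix $G_j$ is obtained from $L$ by a sequence of Schur complements. A careful inductive tracking identifies $G_j$, up to conjugation by rotations $R_{(an+b)\theta_i}$ (which preserve singular values), with the iterated Schur-complement matrix $A_j\iota D^{(j-1)}\iota(L)$ appearing in the hypotheses; hence each $G_j$ has distinct singular values.

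For the final step, one uses the elementary fact that a real $2\times 2$ matrix $G$ with distinct singular values has discriminant $(\operatorname{tr}GR_\alpha)^2-4\det G$ strictly positive on a nonempty open subset $U_G\subset\T$, so $GR_\alpha$ has real distinct eigenvalues there; indeed the trace is a nontrivial trigonometric polynomial whose squared maximum exceeds $4\det G$ exactly when $G$ fails to be a scalar rotation. Pick such an open $U_j\subset\T$ for every $j$ with $i_j=2$. Since $(\theta_i)_{i\in I}$ is rationally independent and $a\in\Z\setminus\{0\}$, the family $(a\theta_i)_{i\in I}$ is also rationally independent, so Weyl's theorem gives equidistribution of $((an+b)\theta_i\bmod 1)_{i\in I}$ on $\T^{|I|}$; infinitely many $n$ thus satisfy $((an+b)\theta_j\bmod 1)\in U_j$ for every two-dimensional $j$. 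For these $n$ the effective blocks $G_j R_{(an+b)\theta_j}$ have real distinct eigenvalues with a uniform discriminant gap dominating the exponentially small triangularization errors; combined with the scale separation between blocks, $LT^{an+b}$ has real simple spectrum with a quantitative gap, and the first-paragraph openness argument propagates this to $L_n T^{an+b}$ along the subsequence.

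The main technical obstacle is the identification in the second paragraph: proving that the outputs of the iterative block-triangularization are precisely the iterated Schur-complement quantities $A_j\iota D^{(j-1)}\iota(L)$ from the hypotheses (with the level-$j=0$ case reducing via $\iota\circ\iota=\operatorname{id}$ to the prescribed form). This is the most delicate bookkeeping in the proof; I would attack it by an induction on $m$ in which peeling off one block at a time replaces $L$ by its Schur complement relative to the extracted block, verifying that the residual conditions on that Schur complement are exactly those assumed for the reduced $(m-1)$-block problem and that the rotations $R_{(an+b)\theta_i}$ pass through the elimination steps without affecting singular values.
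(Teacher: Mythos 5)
Your overall strategy coincides with the paper's: an iterated two-block reduction of $LT^{an+b}$ driven by the modulus gaps between consecutive blocks of $T$, identification of the resulting effective $i_j\times i_j$ blocks with the iterated Schur-complement quantities $A_{j+1}\circ\iota\circ D^{(j)}\circ\iota(L)$ from the hypotheses, and a Kronecker/Weyl argument on the angles $(an+b)\theta_j$ to make each two-dimensional block real and simple (solving your Sylvester equation at each pass is the same computation as finding the fixed point $\xi_{n,J}$ of the paper's graph transform $\phi_{n,J}$). The step that fails as written is the transfer from $L$ to $L_n$ in your first paragraph. Openness of the set of matrices with real and simple spectrum does not ``automatically'' propagate the conclusion from $LT^{an_k+b}$ to $L_{n_k}T^{an_k+b}$: the relevant additive perturbation is $(L_{n_k}-L)T^{an_k+b}$, whose norm is of order $\norm{L_{n_k}-L}\cdot|\lambda_m|^{an_k+b}$ and hence need not be small; it is in fact exponentially large compared with the eigenvalue gaps inside the lower blocks, which live at the scale $|\lambda_j|^{an_k+b}$ with $|\lambda_j|<|\lambda_m|$. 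A quantitative discriminant gap for the effective blocks of $LT^{an+b}$ is a statement relative to the scale of each block and is not stable under a perturbation of that size. This is precisely where the paper works harder: Lemma \ref{11621.3} is proved with all constants ($\beta$, $\gamma$, $n_0$) uniform over a whole ball $\norm{J-J_0}<\beta$, exactly so that the block reduction and the closeness of the effective blocks to $A_{j+1}\circ\iota\circ D^{(j)}\circ\iota(L)$ apply to $L_n$ itself rather than only to $L$ (see the remark after that lemma contrasting with Palis--Viana, where this uniformity is not needed). Your construction is repairable in the same way --- run the triangularization directly on $L_nT^{an+b}$ with estimates uniform for $L_n$ in a fixed neighborhood of $L$ --- but the openness shortcut must be removed.

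A secondary point: a strictly positive discriminant for $G_jR_\alpha$ yields real \emph{distinct} eigenvalues, whereas the paper's notion of simple spectrum requires distinct \emph{absolute values}, so you must also exclude $\tr(G_jR_\alpha)=0$, where the eigenvalues are $\pm\lambda$. Taking $U_j$ to be a neighborhood of the angle at which $G_jR_\alpha$ reduces to its positive-definite polar factor (this is what the paper does by steering $(an+b)\theta_j$ toward $-\alpha^{(j)}$) handles both requirements simultaneously, since there the two eigenvalues are close to the two distinct positive singular values.
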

Taking
\[
T = df^{\ell_0}(p), L = dF^{N_0}(q_2) \quand L_n = dF^{N_0}(F^{n}(p_n)),
\]
we can use the reductions described in this section and the Proposition \ref{10621.2} to prove the density part of the Theorem \ref{7621.1}. The above Proposition \ref{10621.2} is the main technical result of this work and its proof will be described in the Section \ref{17621.2}.

\section{Proof the results}\label{28621.2}

In this section we give the proof of the results stated in the Section \ref{17621.1}.

\paragraph{Proof of Theorem \ref{7621.1}:} Fix $r\in[1,\infty]$. Let $f_0:M\rightarrow M$ be a $C^r$ diffeomorphism admitting a transverse homoclinic intersection associated with a hyperbolic periodic point $p_0$ and let $U_{p_0}\subset M$ be an open neighborhood of $p_0$ as in the statement of the Theorem \ref{7621.1}. Consider the $C^1$-open neighborhood $\mathcal{U}(f_0)\subset \diff^r(M)$ of $f_0$ such that every diffeomorphism in $\mathcal{U}(f_0)$ admits a transverse homoclinic intersection associated with a periodic point in $U_{p_0}$. The existence of such $\mathcal{U}(f_0)$ can be justified using transversality.

Our goal is to guarantee that the set, $\mathcal{R}$, of diffeomorphisms $f\in \mathcal{U}(f_0)\subset \diff^r(M)$ with a periodic point $p\in U_{p_0}$ with simple spectrum is $C^1$-open and $C^r$-dense. Due to the fact that the set of matrices with real and simple spectrum consists of an open set in the set of matrices we can easily see that $\mathcal{R}$ is $C^1$-open.

To prove density, consider any $f\in \mathcal{U}(f_0)$ and let $p\in U_{p_0}$ be a hyperbolic periodic point for $f$, of period $\ell_0$, such that the stable manifold of $p$ and the unstable manifold of $p$ intersects transversely at some point $q\in M$.

Following the reductions described in the Section \ref{10621.1} we are able to assume, after arbitrarily small $C^r$ perturbations, that there exists $U$ linearising coordinates in an open neighborhood $U\subset M$ around $p$ (which can be assume to be a subset of $U_{p_0}$) such that $T := df^{\ell_0}(p) = \diag[T_1:\cdots:T_m]$ satisfying the conditions on Proposition \ref{10621.2}. Taking points $q_1, q_2 \in U$ in the orbit of the transverse homoclinic point $q$, with $f^{\ell_0 N_0}(q_2) = q_1$ we can find a sequence of periodic points $(p_n)_n\subset U$ of period $\ell_0(n+N_0)$ which converges to $q_1$. Taking $L = df^{\ell_0 N_0}(q_2)$ (in $U$ coordinates), which satisfies the conditions of Proposition \ref{10621.2}, and considering $n$ sufficient large we can write (in coordinates)
\[
df^{\ell_0(n + N_0)}(p_n) = df^{\ell_0 N_0}(f^{\ell_0 n}(p_n))\cdot df^{\ell_0 n}(p_n) = L_n\cdot T^{n},
\]
with $L_n$ converging to $L$. Therefore, we are in the conditions to apply Proposition \ref{10621.2} to guarantee that there exists a subsequence $(n_k)_k\subset \N$ such that
\[
df^{\ell_0 (n_k+N_0)}(p_{n_k}) = L_{n_k}T^{n_k}
\]
has real and simple spectrum. In other words, arbitrarily $C^r$-close to $f$ we can find a diffeomorphism which has infinitely many periodic point in $U_{p_0}$ with real and simple spectrum. Hence, $\mathcal{R}\subset \mathcal{U}(f_0)$ is dense in $\mathcal{U}(f_0)$.\qed

\paragraph{Proof of Corollary \ref{11621.2}:} Let $f_0\in \diff^r(M)$ be a diffeomorphism with a basic set $\Lambda_0$ and consider an open subset $U\subset M$ such that $U\cap \Lambda_0 \neq \varnothing$. It is well known that the set of periodic point in $\Lambda_0$ with transverse homoclinic intersection is dense in $\Lambda_0$. Hence, there exists a periodic point $p\in U\cap \Lambda_0$ which has a transverse homoclinic intersection. Using Theorem \ref{7621.1} we can find a neighborhood $\mathcal{U}(f_0)$, of $f_0$, and a $C^1$-open $C^r$-dense set $\mathcal{R}(f_0)\subset \mathcal{U}(f_0)$ such that any diffeomorphism in $\mathcal{R}(f_0)$ has a periodic point inside $U$ with real and simple spectrum.

Now we prove that generically in some $C^1$-open neighborhood of $f_0$ in $\diff^r(M)$ we have that the hyperbolic continuation of $\Lambda$ has a dense subset of periodic orbits with real and simple spectrum. Let $\mathcal{G}(f_0)\subset \diff^r(M)$ be a $C^1$-open neighborhood of $f_0$ such that any diffeomorphism $f\in \mathcal{G}(f_0)$ admits a hyperbolic continuation $\Lambda_f$ of $\Lambda_0$. For each any $n\geq 1$ consider the finite cover, $\mathcal{B}_n$, of $\Lambda_0$ by balls with radius $1/n$. Shrinking $\mathcal{G}(f_0)$ if necessary, we can assume that $\mathcal{B}_n$ is a cover of $\Lambda_f$ for every $f\in \mathcal{U}(f_0)$ and every $n\geq 1$.

By the first part of the Corollary \ref{11621.2} we can find $\mathcal{G}_1\subset \mathcal{G}(f_0)$, $C^1$-open and $C^r$-dense in $\mathcal{G}(f_0)$, such that any $g\in \mathcal{G}_1$ has periodic points with real and simple spectrum in $B\cap\Lambda_g$ for any element $B\in \mathcal{B}_1$ with $B\cap \Lambda_g\neq \varnothing$.

Now, assume that we are able to find $\mathcal{G}_n\subset \mathcal{G}(f_0)$, $C^1$-open and $C^r$-dense in $\mathcal{G}(f_0)$, such that any $g\in \mathcal{G}_n$ has periodic points with simple and real spectrum in $B\cap\Lambda_g$ for any element $B\in \mathcal{B}_n$ with $B\cap\Lambda_g\neq\varnothing$. Then, for each $g\in \mathcal{G}_n$ we can apply the first part of the Corollary \ref{11621.2} to find a $C^1$-open neighborhood $\mathcal{R}_{n+1}(g)\subset \mathcal{G}_n$ of $g$ in $\diff^r(M)$ and a subset $\mathcal{G}_{n+1}(g)\subset \mathcal{R}_{n+1}(g)$ such that any $g'\in \mathcal{G}_{n+1}(g)$ has periodic points with real and simple spectrum in $B\cap\Lambda_g$ for any element $B\in \mathcal{B}_{n+1}$ with $B\cap \Lambda_{g'}\neq\varnothing$. Define
\begin{align*}
    \mathcal{G}_{n+1} := \bigcup_{g\in \mathcal{G}_n}\mathcal{G}_{n+1}(g)\subset \bigcup_{g\in\mathcal{G}_n}\mathcal{R}_{n+1}(g)\subset \mathcal{G}_n.
\end{align*}
Note that $\mathcal{G}_{n+1}$ is $C^1$-open (union of open sets) and $C^r$-dense in $\bigcup_{g\in \mathcal{G}_n}\mathcal{R}_{n+1}(g)$. Since,
\begin{align*}
    \overline{\bigcup_{g\in \mathcal{G}_n}\mathcal{R}_{n+1}(g)} = \overline{\mathcal{G}_n},
\end{align*}
we have that $\mathcal{G}_{n+1}$ is $C^1$-open and $C^r$-dense in $\mathcal{G}(f_0)$.

We build then a decrease sequence of $C^1$-open and $C^r$-dense subsets of $\mathcal{G}(f_0)$. Consider the residual set
\begin{align*}
    \mathcal{G} = \bigcap_{n\geq 1}\mathcal{G}_n,
\end{align*}
and observe that if $g\in \mathcal{G}$, then $\Lambda_g$ has a dense set of periodic points with real and simple spectrum.\qed

\paragraph{Proof of Corollary \ref{8621.1}:} Let $f_0\in \diff^1(M)$ which cannot be approximated by Morse-Smale diffeomorphisms. Then, we can find $f\in \diff^1(M)$ close to $f_0$ such that $f$ admits a periodic point with a transverse homoclinic intersection (see \cite{Cr2010}). Applying Theorem \ref{7621.1} with $r = 1$ we can find $\tilde{f}\diff^1(M)$, $C^1$-close to $f$, such that $\tilde{f}$ has a periodic point with real and simple spectrum.\qed

\section{Proof of the Propositions \ref{10621.2}:}\label{17621.2}

The following lemma plays a major rule in the proof of the Proposition \ref{10621.2} allowing us to decompose the spectrum of $LT^n$, for $n$ sufficient large, in a disjoint union sets which are spectrum of $i$-dimensional matrices with $i\in \{1,2\}$. This reduction led us to analyse the problem only in small dimensional blocks.

In what follows for any function $\psi: \Gamma_1\rightarrow \Gamma_2$, $G(\psi)\subseteq \Gamma_1\times\Gamma_2$, denotes the graph of the map $\psi$. 
\begin{lemma}\label{11621.3}
Fix positive integers $\kappa_1$ and $\kappa_2$, $\kappa_1+\kappa_2 = d$. Consider the linear projections $A,B,C$ and $D$ defined, for every $J\in \M_{\kappa_1\times\kappa_2}(\R)$, by the following expression
\begin{align*}
    J = \left(
    \begin{array}{cc}
        A(J) & B(J) \\
        C(J) & D(J)
    \end{array}
    \right).
\end{align*}
where the blocks respects the fixed decomposition $\R^d = \R^{\kappa_1}\times\R^{\kappa_2}$. Let $V\in \GL_d(\R)$ and $J_0\in \GL_d(\R)$ satisfying the following conditions:
\begin{itemize}
    \item $A(V), A(J_0)\in \GL_{\kappa_1}(\R)$ and $D(V), D(J_0^{-1}) \in \GL_{\kappa_2}(\R)$;
    \item $\norm{D(V)}< \norm{A(V)^{-1}}^{-1}$.
\end{itemize}
Then, for every given $\delta>0$, there exist $\beta>0$, $\gamma>0$ and $n_0\in \N$ such that for every $J\in\GL_d(\R)$, with $\norm{J- J_0}<\beta$, and for every $n\geq n_0$, there exist linear $JV^n$-invariant functions $\xi_{n,J}:\R^{\kappa_1}\rightarrow \R^{\kappa_2}$ and $\eta_{n,J}:\R^{\kappa_2}\rightarrow \R^{\kappa_1}$ such that
\begin{enumerate}
    \item $\norm{\xi_{n,J}}\leq \gamma$ and $\norm{\eta_{n,J}}\leq \gamma\left(\lVert{P^{-1}}\rVert\lvert{Q}\rVert\right)^n$
    \item $G(\xi_{n,J}) \oplus G(\eta_{n,J}) = \R^d$;
    \item $\norm{\left(\pi_{\xi_{n,J}}\cdot JV^n|_{G(\xi_{n,J})}\cdot\pi^{-1}_{\xi_{n,J}}\right)\cdot A(V)^{-n} - A(J_0)}<\delta$;
    \item $\norm{D(V)^n\cdot \left(\pi_{\eta_{n,J}}\cdot (JV^n)^{-1}|_{G(\eta_{n,J})}\cdot \pi^{-1}_{\eta_{n,j}}\right) - D(J_0^{-1})} < \delta$,
\end{enumerate}
where $\pi_{\xi_{n,J}}: G(\xi_{n,J})\rightarrow \R^{\kappa_1}$ and $\pi_{\eta_{n,J}}:G(\eta_{n,J})\rightarrow \R^{\kappa_2}$ are the canonical projections.
\end{lemma}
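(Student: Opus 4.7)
My strategy is to produce each of $\xi_{n,J}$ and $\eta_{n,J}$ as the fixed point of an explicit linear graph-transform map, exploiting the contraction rate $\rho := \norm{A(V)^{-1}}\cdot\norm{D(V)} < 1$ supplied by the hypothesis. Working in the block-diagonal case $V = \diag[P, Q]$ with $P = A(V)$ and $Q = D(V)$ (the case used in the paper), $JV^n$ factors as
\[
JV^n = \begin{pmatrix} A(J)\,P^n & B(J)\,Q^n \\ C(J)\,P^n & D(J)\,Q^n \end{pmatrix}.
\]
Imposing invariance of the linear graph $G(\xi)$ under $JV^n$ and then factoring $P^n$ out to the right converts the invariance relation into the Riccati-type fixed-point equation
\[
\xi \;=\; \Phi_n(\xi) \;:=\; \bigl(C(J) + D(J)\,Q^n \xi P^{-n}\bigr)\bigl(A(J) + B(J)\,Q^n \xi P^{-n}\bigr)^{-1},
\]
while the same manipulations for $G(\eta)$ yield
\[
\eta \;=\; \Psi_n(\eta) \;:=\; P^{-n}\bigl(A(J) - \eta C(J)\bigr)^{-1}\bigl(\eta D(J) - B(J)\bigr)\, Q^n.
\]
The key observation is that in $\Phi_n$ every occurrence of the unknown is squeezed as $Q^n\xi P^{-n}$ (hence has norm at most $\rho^n\norm{\xi}$), and in $\Psi_n$ the explicit flanking factor $P^{-n}\cdots Q^n$ likewise forces an overall size of order $\rho^n$.

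I would then fix $\gamma$ slightly larger than $\max\bigl\{\norm{C(J_0)A(J_0)^{-1}},\; 2\norm{A(J_0)^{-1}}\norm{B(J_0)}\bigr\}$ and verify, by routine estimates leveraging the squeezing, that for $\beta$ small and $n_0$ large: $\Phi_n$ maps $\{\norm{\xi}\leq\gamma\}$ into itself with Lipschitz constant $O(\rho^n)$, and $\Psi_n$ maps $\{\norm{\eta}\leq\gamma\rho^n\}$ into itself with the same order, uniformly for all $J$ with $\norm{J-J_0}<\beta$ and all $n\geq n_0$. Banach's fixed-point theorem then produces the unique $\xi_{n,J}$ and $\eta_{n,J}$, already obeying the bounds required by item~1. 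Item~2 follows immediately: since $\norm{\xi_{n,J}\eta_{n,J}} \leq \gamma^2 \rho^n < 1$ for $n$ large, the matrix $I - \xi_{n,J}\eta_{n,J}$ is invertible, so $G(\xi_{n,J})\cap G(\eta_{n,J}) = \{0\}$, and by dimension counting $G(\xi_{n,J}) \oplus G(\eta_{n,J}) = \R^d$.

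Items~3 and~4 are then direct computations. For~3, a direct block multiplication gives $\pi_{\xi_{n,J}}\cdot JV^n|_{G(\xi_{n,J})}\cdot\pi_{\xi_{n,J}}^{-1} = A(J)P^n + B(J)Q^n\xi_{n,J}$, so multiplying on the right by $A(V)^{-n} = P^{-n}$ yields $A(J) + B(J)\,Q^n \xi_{n,J} P^{-n}$, whose distance to $A(J_0)$ is at most $\norm{A(J)-A(J_0)} + \gamma\norm{B(J)}\rho^n$, hence less than $\delta$ for $\beta$ small and $n$ large. For~4, using $(JV^n)^{-1} = V^{-n}J^{-1}$ the analogous computation gives $\pi_{\eta_{n,J}}\cdot (JV^n)^{-1}|_{G(\eta_{n,J})}\cdot\pi_{\eta_{n,J}}^{-1} = Q^{-n}\bigl(C(J^{-1})\eta_{n,J} + D(J^{-1})\bigr)$; left-multiplication by $D(V)^n = Q^n$ cancels the $Q^{-n}$ and leaves $C(J^{-1})\eta_{n,J} + D(J^{-1})$, which is within $\norm{C(J^{-1})}\gamma\rho^n + \norm{D(J^{-1}) - D(J_0^{-1})}$ of $D(J_0^{-1})$, again less than $\delta$.

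The main technical obstacle is verifying the uniform contraction constants of $\Phi_n$ and $\Psi_n$ on the chosen balls; this reduces to differentiating the matrix-valued rational expressions above and invoking the single squeezing estimate $\norm{Q^n\xi P^{-n}}\leq\rho^n\norm{\xi}$ together with uniform invertibility of $A(J) + (\text{small})$ on a neighborhood of $J_0$. All other estimates are continuity arguments in $(J, n) \mapsto \beta, n_0$.
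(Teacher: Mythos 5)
Your proposal is correct and follows essentially the same route as the paper: both produce $\xi_{n,J}$ and $\eta_{n,J}$ as Banach fixed points of Riccati-type graph-transform operators whose contraction comes from the squeezing factor $\left(\norm{A(V)^{-1}}\norm{D(V)}\right)^n<1$, and then verify items 3 and 4 by the same block computations $A(J)A(V)^n+B(J)D(V)^n\xi_{n,J}$ and $D(V)^{-n}\bigl(C(J^{-1})\eta_{n,J}+D(J^{-1})\bigr)$. The only cosmetic differences are that the paper uses a polynomial (inverse-free) form of the fixed-point operator for $\xi$, and obtains $\eta_{n,J}$ by first solving for a bounded $\hat\eta_{n,J}$ of the conjugated map $V^{-n}J^{-1}$ and then rescaling by $A(V)^{-n}(\cdot)D(V)^n$, rather than working directly on the shrinking ball as you do.
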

\begin{remark}
In a restrict form and without proof, the Lemma \ref{11621.3} appeared in \cite{PaVi1994} where the uniformity of $n_0$ in a neighborhood $J_0\in \GL_d(\R)$ is not required.
\end{remark}
\begin{proof}
Fix $\delta >0$. By continuity of the maps $A$ and $D\circ \iota$, there exists $\beta = \beta(\delta, J_0)>0$ such that
\begin{align}\label{14621.1}
    \norm{A(J) - A(J_0)} < \delta/2 \quand \norm{D(J^{-1}) - D(J_0^{-1})} < \delta/2,
\end{align}
for every $J\in \GL_d(\R)$ with $\norm{J-J_0}< \beta$. Using also the continuity of the projections $B, C$ and $D$ there exists $\alpha = \alpha (\beta)>0$ such that
\begin{align}
    \max_{\norm{J-J_0}\leq \beta}\left\{
    \norm{D(J)}, \norm{A(J)^{-1}}, \norm{B(J)},\norm{C(J) A(J)^{-1}}
    \right\}
    \leq
    \alpha,
\end{align}
and
\begin{align}\label{14621.2}
    \max_{\norm{J-J_0}\leq \beta}\left\{
    \norm{A(J^{-1})}, \norm{C(J^{-1})}, \norm{D(J^{-1})^{-1}}, \norm{B(J^{-1})D(J^{-1})^{-1}}
    \right\}
    \leq
    \alpha
\end{align}
For each $n\in \N$ and $J\in \GL_d(\R)$ with $\norm{J-J_0}< \beta$, consider the (non-linear) operator $\phi_{n,J}:\M_{\kappa_2\times\kappa_1}(\R)\rightarrow \M_{\kappa_2\times\kappa_1}(\R)$ given by 
\begin{align*}
    \phi_{n,J}(u) = C(J)A(J)^{-1} + \left(
    D(J) - uB(J)
    \right)\cdot D(V)^nuA(V)^{-n}A(J)^{-1}
\end{align*}
Set $\mathcal{B}_{\gamma}$ as the set of maps $u\in \M_{\kappa_2\times\kappa_1}(\R)$ such that $\norm{u} \leq \gamma$, where $\gamma$ is choose such that $\gamma > \alpha$.

\paragraph{Claim 1:} We claim that $\phi_{n,J}(\mathcal{B}_{\gamma})\subset \mathcal{B}_{\gamma}$ for any $n$ large enough. Indeed, consider $n_1 = n_1(||D(V)||\cdot||A(V)^{-1}||, \gamma, \alpha)  \in\N$ given by
\[
n_1 = \left \lceil{
\frac{1}{\log\left(||D(V)||\cdot||A(V)^{-1}||\right)}\log\left(\frac{\gamma - \alpha}{\alpha^2\gamma(1+\gamma)}\right)
}\right \rceil,
\]
which is well defined due to the fact that $||D(V)||\cdot ||A(V)^{-1}|| < 1 $. Then, for $n\geq n_1$ and $u\in \mathcal{B}_{\alpha}$ we have
\begin{align*}
    \norm{\phi_{n,J}(u)}
    &\leq
    \norm{C(J)A(J)^{-1}} + \norm{D(J) - uB(J)}\norm{u}\norm{A(J)^{-1}}\left(||D(V)||\cdot||A(V)^{-1}||\right)^n\\
    &\leq
    \alpha + \alpha^2\gamma(1 + \gamma)\left(||D(V)||\cdot ||A(V)^{-1}||\right)^n\\
    &\leq
    \gamma.
\end{align*}
Hence, the map $\phi_{n,J}$ preserves $\mathcal{B}_{\gamma}$ for every $J\in\GL_d(R)$ with $\norm{J-J_0}< \beta$ and for every $n\geq n_1$.

\paragraph{Claim 2:} The map $\phi_{n,J}|_{\mathcal{B}_{\gamma}}:\mathcal{B}_{\gamma}\rightarrow \mathcal{B}_{\gamma}$ is a contraction for every $n \geq \max\{n_1, n_2\}$ with $n_2 = n_2(||D(V)||\cdot ||A(V)^{-1}||, \alpha, \gamma) \in \N$ given by
\begin{align*}
    n_2 = \left\lceil{
    \frac{1}{||D(V)||\cdot ||A(V)^{-1}||}\log\left(\frac{1}{\alpha^2(1+2\gamma)}\right)
    }\right\rceil + 1.
\end{align*}
Indeed, observe that for any $u,v\in \mathcal{B}_{\gamma}$ we have
\begin{align*}
    &\norm{\phi_{n,J}(u) - \phi_{n,J}(v)}\\
    &\leq
    \norm{(D(J) - uB(J))D(V)^nuA(V)^{-n} - (D(J) - vB(J))D(V)^nvA(V)^{-n}}\lVert A(J)^{-1}\rVert\\
    &\leq \alpha\left(
    \norm{D(J) - uB(J)}+ \norm{B(J)}\norm{v}
    \right)\norm{u-v}\left(\norm{D(V)}\lVert A(V)^{-1}\rVert\right)^n\\
    &\leq \left[
    \alpha^2(1+2\gamma)\left(\norm{D(V)}\lVert A(V)^{-1}\rVert\right)^n
    \right]\norm{u-v}.
\end{align*}
Then, for every $n\geq \max\{n_1,n_2\}$ we have that
\begin{align*}
    Lip\left(\phi_{n,J}\right) \leq \alpha^2(1+2\gamma)\left(\norm{D(V)}\lVert A(V)^{-1}\rVert\right)^n < 1.
\end{align*}

By Banach's contraction mapping theorem, for any $n\geq \max\{n_1,n_2\}$ there exists a unique $\xi_{n,J}\in \mathcal{B}_{\gamma}$ such that
\[
\phi_{n,J}(\xi_{n,J}) = \xi_{n,J}.
\]
Define the the matrix $\varphi_{n,J}\in \M_{\kappa_1}(\R)$ given by the following expression:
\[
\varphi_{n,J} = A(J)A(V)^n + B(J)D(V)^n\xi_{n,J}.
\]
Using the fact that $\xi_{n,J}$ is fixed by $\phi_{n,J}$ we observe that
\begin{align*}
    JV^n\cdot
    \begin{pmatrix}
    x\\
    \xi_{n,J}\cdot x
    \end{pmatrix}
    &=
    \begin{pmatrix}
    A(J)A(V)^n\cdot x + B(J)D(V)^n\xi_{n,J}\cdot x\\
    C(J)A(V)^n\cdot x + D(J)D(V)^n\xi_{n,J}\cdot x.
    \end{pmatrix}\\
    &=
    \begin{pmatrix}
    \varphi_{n,J}\cdot x\\
    \xi_{n,J} \varphi_{n,J}\cdot x
    \end{pmatrix}.
\end{align*}
Hence,
\[
JV^n(G(\xi_{n,J})) = G(\xi_{n,J}),
\]
and
\begin{align*}
    \varphi_{n,J} = \pi_{\xi_{n,J}}\cdot JV^n|_{G(\xi_{n,J})}\cdot\pi^{-1}_{\xi_{n,J}}.
\end{align*}
Note also that if we set 
\begin{align*}
    n_3 = \left\lceil{
    \frac{1}{\lVert{D(V)}\rVert\lVert{A(V)^{-1}}\rVert}\log\frac{\delta}{2\gamma\alpha}
    }\right\rceil + 1,
\end{align*}
$n_3 = n_3(\lVert{D(V)}\rVert\lVert{A(V)^{-1}}\rVert, \delta, \gamma,\alpha) \in\N $ and consider $n_0^+ = \max\{n_1,n_2,n_3\}$, using $\ref{14621.1}$, we have that for $n\geq n_0^+$,
\begin{align*}
    \norm{\varphi_{n,J}A(V)^{-n} - A(J_0)} &= \norm{B(J)D(V)^n\xi_{n,J}A(V)^{-n} + A(J) - A(J_0)}\\
    &\leq \norm{B(J)}\norm{\xi_{n,J}}\left(\lVert{D(V)}\rVert\lVert{A(V)^{-1}}\rVert\right)^n + \frac{\delta}{2}\\
    &\leq \gamma \alpha\left(\lVert{D(V)}\rVert\lVert{A(V)^{-1}}\rVert\right)^n + \frac{\delta}{2}\\
    &< \frac{\delta}{2} +\frac{\delta}{2} = \delta.
\end{align*}

Applying the same ideas to he product $J^{-1}V^{-n}$ (observe that $J^{-1}V^{-n}$ is conjugated by $V^n$ to $V^{-n}J^{-1}$) we find a non linear operator operator $\Psi: \M_{\kappa_1\times\kappa_2}(\R)\rightarrow \M_{\kappa_1\times\kappa_2}(\R)$ given by
\begin{align*}
    \Psi(u) = B(J^{-1})D(J^{-1})^{-1} + (A(J^{-1}) - uC(J^{-1}))A(V)^{-n}uD(V)^nD(J^{-1})^{-1}.
\end{align*}
Choosing $\beta$, $\alpha$ and $\gamma$ as above we can find $n_0^-\in \N$ such that $\Psi$ has an unique fixed $\hat{\eta}_{n,J} \in \M_{\kappa_1\times\kappa_2}(\R)$, $\norm{\hat{\eta}_{n,J}} \leq \gamma$, for every $n\geq n_0^-$ and $J\in \GL_d(\R)$ such that $\norm{J-J_0} \leq \beta$. Hence, by construction, we have
\[
J^{-1}V^{-n}(G(\hat{\eta}_{n,J})) = G(\hat{\eta}_{n,J}).
\]
Define $\eta_{n,J} := A(V)^{-n}\hat{\eta}_{n,J}D(V)^n \in \M_{\kappa_1\times\kappa_2}(\R)$. Then,
\[
\norm{\eta_{n,J}}\leq \gamma\left(\lVert{A(V)^{-1}}\rVert\lVert{D(V)}\rVert\right)^n
\]
and
\[
(JV^n)^{-1}(G(\eta_{n,J})) = V^{-n}\cdot(J^{-1}V^{-n})(G(\hat{\eta}_{n,J})) = V^{-n}(G(\hat{\eta}_{n,J})) = G(\eta_{n,J}).
\]
Moreover, by an appropriate choose of $n_0^-$ and by \eqref{14621.2} we have that if we set
\[
\psi_{n,J} := \pi_{\eta_{n,J}}\cdot (JV^n)^{-1}|_{G(\eta_{n,J})}\cdot \pi^{-1}_{\eta_{n,j}} = Q^{-n}C(J^{-1})\eta_{n,J} + Q^{-n}D(J^{-1}),
\]
then
\[
\norm{Q^n\psi_{n,J} - D(J_0^{-1})} < \delta.
\]
Taking $n_0 > \max\{n_0^+, n_0^-\}$ we have that for every $n\geq n_0$ and for every $J\in \GL_d(\R)$ with $\norm{J-J_0}\leq \beta$,
\[
G(\xi_{n,J}) \oplus G(\eta_{n,J}) = \R^d,
\]
which concludes the proof of the Lemma.
\end{proof}

The next Lemma is a direct corollary of Lemma \ref{11621.3} and gives the decomposition of the spectrum of matrices of the form $JV^n$ for $n$ sufficient large.
\begin{lemma}\label{14621.3}
In the context of Lemma \ref{11621.3} we find sequences $(X_n)_n\subset\M_{\kappa_1}(\R)$ and $(Y_n)\subset \M_{\kappa_2}(\R)$ such that for every $n\geq n_0$ the following holds
\begin{itemize}
    \item $\norm{X_n - A(J_0)} < \delta$, $\norm{Y_n^{-1} - D(J_0^{-1})}$;
    \item If $\zeta\in\sigma(X_nA(V)^n)$ and $\lambda\in \sigma(Y_nD(V)^n)$, then $|\zeta|>|\lambda|$;
    \item $\sigma(JV^n) = \sigma(X_nA(V)^n)\cup\sigma(Y_nD(V)^n)$.
\end{itemize}
\end{lemma}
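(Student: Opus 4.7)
The plan is to read off the sequences $X_n$ and $Y_n$ directly from the invariant subspaces produced by Lemma \ref{11621.3}, so this is really a bookkeeping corollary of that lemma. Fix $\delta>0$ and let $\beta,\gamma,n_0$ be the constants it produces for this $\delta$, $V$ and $J_0$. For each $n\geq n_0$ and each $J\in\GL_d(\R)$ with $\|J-J_0\|<\beta$, I would define
\[
X_n \;:=\; \varphi_{n,J}\,A(V)^{-n}, \qquad Y_n \;:=\; \psi_{n,J}^{-1}\,D(V)^{-n},
\]
with $\varphi_{n,J}$ and $\psi_{n,J}$ as in the proof of Lemma \ref{11621.3}. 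The identities $X_nA(V)^n=\varphi_{n,J}$ and $Y_nD(V)^n=\psi_{n,J}^{-1}$ are built in, and the estimates $\|X_n-A(J_0)\|<\delta$ and $\|Y_n^{-1}-D(J_0^{-1})\|<\delta$ are exactly items~(3) and~(4) of Lemma \ref{11621.3}.

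For the spectral decomposition, note that $\R^d=G(\xi_{n,J})\oplus G(\eta_{n,J})$ is a decomposition into $JV^n$-invariant subspaces by item~(2) of Lemma \ref{11621.3}. Choosing any basis of $\R^d$ adapted to this splitting, $JV^n$ becomes block-diagonal; via the coordinates $\pi_{\xi_{n,J}}$ and $\pi_{\eta_{n,J}}$ the two blocks are similar to $\varphi_{n,J}=X_nA(V)^n$ and to $\psi_{n,J}^{-1}=Y_nD(V)^n$ respectively, the second because restricting an inverse to an invariant subspace yields the inverse of the restriction. Therefore $\sigma(JV^n)=\sigma(X_nA(V)^n)\cup\sigma(Y_nD(V)^n)$, which is the third assertion.

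The dominance assertion would come from the elementary inequality $|\mu|\geq\|M^{-1}\|^{-1}$ valid for every eigenvalue $\mu$ of an invertible matrix $M$. Since $X_n$ and $Y_n^{-1}$ lie in a $\delta$-neighborhood of the invertible matrices $A(J_0)$ and $D(J_0^{-1})$, the quantities $\|X_n^{-1}\|$ and $\|Y_n\|$ are uniformly bounded, in both $n$ and $J$, by some constant $C=C(\delta,J_0)$. Any $\zeta\in\sigma(X_nA(V)^n)$ therefore satisfies
\[
|\zeta|\;\geq\;\bigl\|(X_nA(V)^n)^{-1}\bigr\|^{-1}\;\geq\;\frac{1}{C\,\|A(V)^{-1}\|^{n}},
\]
while every $\lambda\in\sigma(Y_nD(V)^n)$ satisfies $|\lambda|\leq C\,\|D(V)\|^{n}$. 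Since the hypothesis $\|D(V)\|<\|A(V)^{-1}\|^{-1}$ of Lemma \ref{11621.3} is precisely $\|D(V)\|\|A(V)^{-1}\|<1$, the ratio $|\lambda|/|\zeta|\leq C^{2}\bigl(\|D(V)\|\|A(V)^{-1}\|\bigr)^{n}$ decays to zero uniformly in $J$, so after possibly enlarging $n_0$ once more we obtain $|\zeta|>|\lambda|$ for all $n\geq n_0$ and all admissible $J$. No real obstacle is expected here: the entire analytic content is already contained in Lemma \ref{11621.3}, and what remains is this simple geometric-series estimate together with the identification of the two invariant blocks.
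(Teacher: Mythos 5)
Your proposal is correct and follows essentially the same route as the paper: the paper also sets $X_n=\varphi_{n,J}A(V)^{-n}$ and $Y_n=\psi_{n,J}^{-1}D(V)^{-n}$, gets the closeness estimates from items (3)--(4) of Lemma \ref{11621.3}, obtains the spectral splitting from the $JV^n$-invariant decomposition $G(\xi_{n,J})\oplus G(\eta_{n,J})=\R^d$, and derives the dominance from the bound $\lVert (X_nA(V)^n)^{-1}\rVert\,\lVert Y_nD(V)^n\rVert\leq \lVert X_n^{-1}\rVert\,\lVert Y_n\rVert\left(\lVert A(V)^{-1}\rVert\,\lVert D(V)\rVert\right)^n<1$ after enlarging $n_0$, exactly as in your geometric-series estimate (with the same implicit assumption that $\delta$ is small enough for $\lVert X_n^{-1}\rVert$ and $\lVert Y_n\rVert$ to be uniformly controlled).
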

\begin{proof}
Define
\begin{align*}
    X_n:= \left(
    \pi_{\xi_{n,J}}\cdot JV^n|_{G(\xi_{n,J})}\cdot \pi_{\xi_{n,J}}^{-1}
    \right)\cdot A(V)^{-n}
\end{align*}
and
\begin{align*}
    Y_n:= \left(
    \pi_{\eta_{n,J}}\cdot (JV^n)^{-1}|_{G(\eta_{n,J})}\cdot \pi^{-1}_{\eta_{n,j}}
    \right)^{-1}\cdot D(V)^{-n}.
\end{align*}
So, by Lemma \ref{11621.3} we have that, for every $n\geq n_0$,
\[
\norm{X_n - A(J_0)}< \delta \quand \norm{Y_n^{-1} - D(J_0)^{-1}}<\delta.
\]
Note also that, increasing $n_0$ if necessary, we have
\begin{align*}
\lVert{(X_nA(V)^n)^{-1}}\rVert\lVert{Y_nD(V)^n}\rVert
&\leq
\lVert{X_n^{-1}}\rVert\lVert{Y_n}\rVert\left(\norm{A(V)^{-1}}\norm{D(V)}\right)^n\\
&< 1,
\end{align*}
which implies that the second item of the Lemma. Moreover, since
\[
G(\xi_{n,J}) \oplus G(\eta_{n,J}) = \R^d,
\]
we have that
\begin{align*}
    \sigma(JV^n) &= \sigma(JV^n|_{G(\xi_{n,J})})\cup\sigma(JV^n|_{G(\eta_{n,J})})\\
    &= \sigma(X_nA(V)^n)\cup\sigma(Y_nD(V)^n).
\end{align*}
\end{proof}

\paragraph{Proof of the Proposition \ref{10621.2}:} Consider the sequence $(L_n)\subset \GL_d(\R)$, converging to $L\in \GL_d(\R)$ and $T = \diag[T_1:\cdots:T_m] \in \GL_d(\R)$ satisfying the conditions of the proposition. Remember that we are fixing the decomposition $\R^d = \R^{i_1}\times\cdots\times \R^{i_m}$ with $i_j\in \{1,2\}$ for every $j = 1,\dots, m$. We also use the following notation:
\begin{itemize}
    \item For each $j=1,\dots, m$, $\kappa_j = i_j+\cdots i_m$;
    \item For every invertible quadratic matrix $J$, $\iota(J) = J^{-1}$;
    \item For every $J\in \M_{\kappa_j}(\R)$ we denote by $A_j:\M_{\kappa_j}(\R)\rightarrow \M_{i_j}(\R)$ and $D_j:\M_{\kappa_j}(\R)\rightarrow \M_{\kappa_{j+1}}(\R)$ the canonical projections such that
    \begin{align*}
        J=
        \left(
        \begin{array}{cc}
            A_j(J) & * \\
            * & D_j(J)
        \end{array}
        \right);
    \end{align*}
    Observe that, since $T$ is diagonal we have that $A_j(T) = T_j$;
    \item $D^{(j)}:\M_d(\R)\rightarrow \M_{\kappa_{j+1}}(\R)$ is given by $D^{(j)}(J) = D_j\circ\cdots\circ D_1(J)$, $D^{(0)}(J) = J$.
\end{itemize}

For each $j = 1,\dots, m-1$ we apply Lemma \ref{14621.3} with the decomposition $\R^{\kappa_j} = \R^{i_j}\times\R^{\kappa_{j+1}}$ and the matrices $J_0 = \iota\circ D^{(j-1)}\circ\iota(L)\in \GL_{\kappa_j}(\R)$ and $V = D^{(j-1)}(T)$ to obtain that for any given $\delta_j> 0$, there exists $n_{m-j}\in \N$ and $\beta_j\in(0,\delta_j)$ such that for every $J\in \GL_{\kappa_j}(\R)$, with $\norm{J - \iota\circ D^{(j-1)}\circ\iota(L)}< \beta_j$, and for every $n\geq n_{m-j}$ there exists quadratic matrices $X_{n,J}\in \M_{i_j}(\R)$ and $Y_{n,J}\in \M_{\kappa_{j+1}}(\R)$ satisfying that
\begin{itemize}
    \item $\sigma(JD^{(j-1)}(T)^n)) = \sigma(X_{n,J}T_j^n)\cup\sigma(Y_{n,J}D^{(j)}(T)^n)$;
    \item $\norm{X_{n,J} - A_{j+1}\circ\iota\circ D^{(j)}\circ\iota(L)} < \delta_j$ and $\norm{\iota(Y_{n,J}) - D^{(j)}\circ\iota(L)} < \delta_j$.
\end{itemize}

\subparagraph{Choice of parameters:} Fix $\varepsilon_0>0$. The idea is inductively to choose the sequence of parameters $\delta_j>0$, $j =1,\dots, m-1$, starting in the last stage $\delta_{m-1}$. It is important to highlight that for each chosen $\delta_{m-j+1}$ the Lemma \ref{14621.3} give us a respective $\beta_{m-j+1}>0$ which will be used to define the next $\delta_{m-j}$ for every $j=1,\dots,m-1$.
\begin{enumerate}
    \item Choose $\delta_{m-1} = \delta_{m-1}(\varepsilon_0, D^{(m-1)}\circ\iota(L))\in (0,\varepsilon_0)$ such that for every $J\in GL_{\kappa_m}(\R) = \GL_{i_m}(\R)$, with
    \[
    \norm{\iota(J) - D^{(m-1)}\circ \iota(L)} < \delta_{m-1}
    \]
    we have
    \[
    \norm{J-\iota\circ D^{(m-1)}\circ\iota(L)}< \varepsilon_0.
    \]
    The existence of such $\delta_{m_1}$ is justified by the continuity of $\iota$ at $D^{(m-1)}\circ\iota(L)$.
    
    Assume that we have chosen $\delta_{m-1}>\cdots>\delta{m-j+1}$. Let $\beta_{m-j+1}>0$ associated with $\delta_{m-j+1}$ given by the Lemma \ref{14621.3}. Now take
    \[
    \delta_{m-j} = \delta_{m-j}(\delta_{m-j+1}, D^{(m-j)}\circ\iota (L))\in (0,\delta_{m-j+1})
    \]
    such that for every $J\in \GL_{\kappa_{m-j}}(\R)$ with
    \[
    \norm{\iota(J) - D^{(m-j)}\circ\iota(L)}< \delta_{m-j}
    \]
    we have
    \[
    \norm{J - \iota\circ D^{(m-j)}\circ\iota(L)} < \beta_{m-j+1}.
    \]
    The existence of $\delta_{m-j}$ is justified by the continuity of $\iota$ at $D^{(m-j)}\circ\iota(L)$. So, by induction we have defined $0< \delta_1<\delta_2<\cdots<\delta_{m-1} < \varepsilon_0$.
    
    \item Choose $k_0\in \N$ such that for every $k\geq k_0$
    \[
    \norm{L_k - L}< \delta_1.
    \]
\end{enumerate}

So, we can find $n_0\in \N$ such that for every $n\geq n_0$, for every $k\geq k_0$ and for every $j=1,\dots, m-1$ there exist $X^{(j)}_{n,k} = X_{n,X^{(j-1)}_{n,k}} \in \M_{i_j}(\R)$ and $Y^{(j)}_{n,k} = Y_{n,Y^{(j-1)}_{n,k}}\in \M_{\kappa_{j+1}}(\R)$, with the convention that $X^{(0)}_{n,k}=Y^{(0)}_{n,k} = L_k$, satisfying that
\begin{itemize}
    \item $\sigma(Y^{(j-1)}_{n,k}D^{(j-1)}(T)^n) = \sigma(X^{(j)}_{n,k}T_j^n)\cup\sigma(Y^{(j)}_{n,k}D^{(j)}(T)^n)$;
    \item $\norm{X^{(j)}_{n,k} - A_{j+1}\circ\iota\circ D^{(j)}\circ\iota(L)} <\delta_j$ and $\norm{\iota(Y^{(j)}_{n,k}) - D^{(j)}\circ\iota(L)} < \delta_j$.
\end{itemize}
 Observe that by the choice of $\delta_j$ this implies that
    \[
    \norm{Y^{(j)}_{n,k} - \iota\circ D^{(j)}\circ\iota(L)} < \beta_j.
    \]
Hence, for every $n\geq n_0$ and for every $k\geq k_0$ we have that
    \begin{align}\label{15621.3}
        \sigma(L_kT^n) = \bigcup_{j=1}^m\sigma(X^{(j)}_{n,k}T^n_j),
    \end{align}
    and
    \begin{align}\label{15621.1}
        \norm{X^{(j)}_{n,k} - A_{j+1}\circ\iota\circ D^{(j)}\circ\iota(L)}<\varepsilon_0,
    \end{align}
for every $j \in \{1,\dots, m-1\}$, with the convention that $A_m\circ\iota\circ D^{(m-1)}\circ\iota(L)) = \iota\circ D^{(m-1)}\circ\iota(L)$ and that $X^{(m)}_{n,k} = Y^{(m-1)}_{n,k}$ and so,
\begin{align}\label{15621.2}
    \norm{X^{(m)}_{n,k} - \iota\circ D^{(m-1)}\circ\iota(L)} < \beta_j < \varepsilon_0.
\end{align}
Consider the set
\[
I = \{j\in \{1,\dots,m\};\ i_j = 2\}.
\]
Using polar form decomposition theorem for each $j\in I$, for every $n\geq n_0$ and for every $k\geq k_0$ there exist positive definite matrices $P^{(j)}$, $P^{(j)}_{n,k}$ and rotations $R_{\alpha^{(j)}}$, $R_{\alpha^{(j)}_{n,k}}$ such that 
\begin{align*}
    A_{j+1}\circ\iota\circ D^{(j)}\circ(L) = P^{(j)}R_{\alpha^{(j)}} \quand X^{(j)}_{n,k} = P^{(j)}_{n,k}R_{\alpha^{(j)}_{n,k}}.
\end{align*}
If we choose $\varepsilon_0>0$ sufficient small, by inequalities \eqref{15621.1} and \eqref{15621.2}, there exists $\hat{\varepsilon}>0$ small, such that, for every $\varepsilon\in (0,2\hat{\varepsilon})$, we have that the eigenvalues of $P^{(j)}_{n,k}R_{\varepsilon}$ are real and simple, moreover $\alpha^{(j)}_{n,k}$ is $\varepsilon$-close to $\alpha^{(j)}$ for every $n\geq n_0$ and every $k\geq k_0$.

Consider the arithmetic progression $(an+b)_n$ as in the statement of the Proposition \ref{10621.2} and write
\[
P^{(j)}_n := P^{(j)}_{an+b, n}, \alpha^{(j)}_n = \alpha^{(j)}_{an+b,n},
\]
and
\begin{align*}
X^{(j)}_n = X^{(j)}_{an+b, n} = P^{(j)}_nR_{\alpha^{(j)}_n}.
\end{align*}
Then, increasing $n_0$ if necessary,
\begin{align*}
    X^{(j)}_nT^{an+b}_j = P^{(j)}_nR_{\alpha^{(j)}_n + (an+b)\theta_j}.
\end{align*}
Since $\theta = (\theta_{i_j})_{j\in I}\in \R^{\# I}$ is rationally independent we can consider a subsequence $(n_l)_l\subset \N$, $n_l\to\infty$, such that
\[
a\theta n_l + b\theta \to -\alpha = -(\alpha^{(j)})_{j\in I}.
\]
So, defining $\varepsilon_{n_l} = \alpha^{(j)}_{n_l} + (an_l+b)\theta_j$ we have that there exists $l_0\in\N$ such that $\varepsilon_{n_l} \in (0,2\hat{\varepsilon})$ for every $l\geq l_0$. Therefore,
\[
X^{(j)}_{n_l}T^{an_l+b}_j = P^{(j)}_{n_l}R_{\varepsilon_{n_l}}
\]
has real and simple spectrum for every $j=1,\dots,m$ and so, by equation \eqref{15621.3},
\begin{align*}
    L_{n_l}T^{an_l+b}
\end{align*}
has real and simple spectrum.\qed

\subsection*{Acknowledges:} The authors will like to thank Mauricio Poletti for the suggestions that help to improve the writing of the manuscript. J. B. was supported by CAPES and FCIENCIAS.ID.

\bibliographystyle{abbrv}
\bibliography{head.bib}

\information

\end{document}